\documentclass{amsart}
\usepackage{amsmath}
\usepackage{amssymb}
\usepackage{amsfonts}

\setcounter{MaxMatrixCols}{10}

\newtheorem{theorem}{Theorem}
\theoremstyle{plain}

\newtheorem{definition}{Definition}

\newtheorem{lemma}{Lemma}

\numberwithin{equation}{section}
\input{tcilatex}

\begin{document}
\title[]{A NEW GENERALIZATION OF THE MIDPOINT FORMULA FOR $n-$TIME
DIFFERENTIABLE MAPPINGS WHICH ARE CONVEX}
\author{M. EM\.{I}N \"{O}ZDEM\.{I}R$^{\bigstar }$}
\address{$^{\bigstar }$ATAT\"{U}RK UNIVERSITY, K. K. EDUCATION FACULTY,
DEPARTMENT OF MATHEMATICS, 25240, CAMPUS, ERZURUM, TURKEY}
\email{emos@atauni.edu.tr}
\author{\c{C}ET\.{I}N YILDIZ$^{\bigstar ,\spadesuit }$}
\email{cetin@atauni.edu.tr}
\thanks{$^{\spadesuit }$Corresponding Author.}
\subjclass[2000]{ 26D15, 26D10.}
\keywords{Hermite-Hadamard Inequality, H\"{o}lder Inequality, Convex
Functions.}

\begin{abstract}
In this paper, we establish several new inequalities for $n-$ time
differentiable mappings that are connected with the celebrated
Hermite-Hadamard integral inequality.
\end{abstract}

\maketitle

\section{INRODUCTION}

On November 22, 1881, Hermite (1822-1901) sent a letter to the Journal
Mathesis. This letter was published in Mathesis 3 (1883, p: 82) and in this
letter an inequality presented which is well-known in the literature as
Hermite-Hadamard integral inequality:%
\begin{equation}
f\left( \frac{a+b}{2}\right) \leq \frac{1}{b-a}\int_{a}^{b}f(x)dx\leq \frac{%
f(a)+f(b)}{2}  \label{1}
\end{equation}%
where $f:I\subseteq 
\mathbb{R}
\rightarrow 
\mathbb{R}
$ is a convex function on the interval $I$ of a real numbers and $a,b\in I$
with $a<b.$ If the function $f$ is concave, the inequality in (\ref{1}) is
reversed.

The inequalities (\ref{1}) have become an important cornerstone in
mathematical anlysis and optimization. Many uses of these inequalities have
been discovered in a variety of settings. Moreover , many inequalities of
special means can be obtained for a particular choice of the function $f$.
Due to the rich geometrical significance of Hermite-Hadamard's inequlity,
there is growing literature providing its new proofs, extensions,
refinements and generalizations, see for example (\cite{alom}, \cite{drag}, 
\cite{jian}-\cite{kirrr}, \cite{az}-\cite{xi}) and the references therein.

\begin{definition}
A function $f:[a,b]\subset 
\mathbb{R}
\rightarrow 
\mathbb{R}
$ is said to be convex if whenever $x,y\in \lbrack a,b]$ and $t\in \lbrack
0,1]$, the following inequality holds:%
\begin{equation*}
f(tx+(1-t)y)\leq tf(x)+(1-t)f(y).
\end{equation*}
\end{definition}

We say that $f$ is concave if ($-f$) is convex. This definition has its
origins in Jensen's results from \cite{d} and has opened up the most
extended, useful and multi-disciplinary domain of mathematics, namely,
convex analysis. Convex curves and convex bodies have appeared in
mathematical literature since antiquity and there are many important results
related to them.

For other recent results concerning the $n-$time differentiable functions
see \cite{bai}-\cite{ceron}, \cite{hang}, \cite{jian}, \cite{yii}, \cite%
{wang} where further references are given.

The main purpose of the present paper is to establish several new
inequalities for $n-$ time differantiable mappings that are connected with
the celebrated Hermite-Hadamard integral inequality.

\section{MAIN RESULTS}

\begin{lemma}
\label{2}For $n\in 
\mathbb{N}
$; let $f:I\subset 
\mathbb{R}
\rightarrow 
\mathbb{R}
$ be n-time differentiable. If $a,b\in I$ with $a<b$ and $f^{(n)}\in L[a,b]$%
, then%
\begin{eqnarray}
\int_{a}^{b}f(t)dt &=&\overset{n-1}{\underset{k=0}{\sum }}\left( \frac{%
1+(-1)^{k}}{2^{k+1}(k+1)!}\right) (b-a)^{k+1}f^{(k)}\left( \frac{a+b}{2}%
\right)   \label{a} \\
&&+(b-a)^{n+1}\int_{0}^{1}M_{n}(t)f^{(n)}(ta+(1-t)b)dt  \notag
\end{eqnarray}%
where%
\begin{equation*}
M_{n}(t)=\left\{ 
\begin{array}{cc}
\frac{t^{n}}{n!}, & t\in \left[ 0,\frac{1}{2}\right]  \\ 
&  \\ 
\frac{(t-1)^{n}}{n!}, & t\in \left( \frac{1}{2},1\right] .%
\end{array}%
\right. 
\end{equation*}%
and $n$ natural number, $n\geq 1$.
\end{lemma}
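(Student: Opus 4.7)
The plan is to establish the identity via a single integration-by-parts recurrence and iterate it. With the shorthand
\[I_n := (b-a)^{n+1}\int_0^1 M_n(t)\, f^{(n)}(ta+(1-t)b)\,dt, \qquad A_k := \frac{1+(-1)^k}{2^{k+1}(k+1)!}(b-a)^{k+1}f^{(k)}\!\left(\tfrac{a+b}{2}\right),\]
the claim reads $\int_a^b f(t)\,dt = \sum_{k=0}^{n-1} A_k + I_n$. I first note that $M_0\equiv 1$, so the substitution $u = ta+(1-t)b$ gives $I_0 = \int_a^b f(u)\,du$; it will therefore suffice to prove the one-step recurrence $I_n = I_{n-1} - A_{n-1}$ for every $n\ge 1$ and then telescope.

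To establish the recurrence I will split $I_n$ at $t=1/2$ and apply one integration by parts on each piece, taking $u=t^n/n!$ on $[0,1/2]$ and $u=(t-1)^n/n!$ on $(1/2,1]$, with $dv = f^{(n)}(ta+(1-t)b)\,dt$, whose antiderivative is $v = -f^{(n-1)}(ta+(1-t)b)/(b-a)$. The boundary contributions at $t=0$ and $t=1$ vanish because of the factors $t^n$ and $(t-1)^n$, and differentiating $u$ reproduces $M_{n-1}(t)$ inside the remaining integral, so the ``bulk'' of both pieces recombines, after multiplication by the prefactor $(b-a)^{n+1}$, into $I_{n-1}$. The boundary contributions at $t=1/2$ combine into
\[-\frac{1-(-1)^n}{2^n\,n!\,(b-a)}\,f^{(n-1)}\!\left(\tfrac{a+b}{2}\right),\]
which, once multiplied by $(b-a)^{n+1}$ and rewritten using $1-(-1)^n = 1+(-1)^{n-1}$, is exactly $-A_{n-1}$.

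As a sanity check, $A_k$ equals $\tfrac{1}{k!}\int_a^b \bigl(x-\tfrac{a+b}{2}\bigr)^k dx \cdot f^{(k)}(\tfrac{a+b}{2})$, so the identity is nothing but the Taylor expansion of $f$ around the midpoint, integrated term by term, with the integral remainder rewritten via $x = ta+(1-t)b$; this also explains why the odd-order Taylor terms drop out, leaving the selector $1+(-1)^k$. The main obstacle will be sign bookkeeping: the two pieces contribute with opposite conventions from the values of $t^n$ and $(t-1)^n$ at $t=1/2$, and it is the cancellation pattern $-1+(-1)^n = -(1+(-1)^{n-1})$ that one must verify produces the coefficient of $A_{n-1}$ exactly. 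Once this one calculation is carried out carefully, telescoping $I_n = I_0 - \sum_{k=0}^{n-1} A_k$ yields the lemma.
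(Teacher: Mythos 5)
Your argument is correct and is essentially the paper's proof: the one-step integration-by-parts recurrence $I_{n+1}=I_n-A_n$ you derive (split at $t=1/2$, boundary terms at $0$ and $1$ vanishing, the $t=1/2$ contributions combining via $1-(-1)^{n}=1+(-1)^{n-1}$) is exactly the computation in the paper's inductive step, and telescoping it is the same as the paper's induction. The only cosmetic difference is that you anchor at $n=0$ via $M_0\equiv 1$ and the substitution $u=ta+(1-t)b$, which makes the proof self-contained, whereas the paper starts the induction at $n=1$ by citing K\i rmac\i's Lemma 2.1.
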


\begin{proof}
The proof is by mathematical induction.

The case $n=1$ is [\cite{kir}, Lemma 2.1].

Assume that (\ref{a}) holds for "$n$" and let us prove it for "$n+1$". \
That is, we have to prove the equality%
\begin{eqnarray}
\int_{a}^{b}f(t)dt &=&\overset{n}{\underset{k=0}{\sum }}\left( \frac{%
1+(-1)^{k}}{2^{k+1}(k+1)!}\right) (b-a)^{k+1}f^{(k)}\left( \frac{a+b}{2}%
\right)  \label{b} \\
&&+(b-a)^{n+2}\int_{0}^{1}M_{n+1}(t)f^{(n+1)}(ta+(1-t)b)dt  \notag
\end{eqnarray}%
where, obviously,%
\begin{equation*}
M_{n+1}(t)=\left\{ 
\begin{array}{cc}
\frac{t^{n+1}}{(n+1)!}, & t\in \left[ 0,\frac{1}{2}\right] \\ 
&  \\ 
\frac{(t-1)^{n+1}}{(n+1)!}, & t\in \left( \frac{1}{2},1\right] .%
\end{array}%
\right.
\end{equation*}%
We have%
\begin{eqnarray*}
&&(b-a)^{n+2}\int_{0}^{1}M_{n+1}(t)f^{(n+1)}(ta+(1-t)b)dt \\
&=&(b-a)^{n+2}\left\{ \int_{0}^{\frac{1}{2}}\frac{t^{n+1}}{(n+1)!}%
f^{(n+1)}(ta+(1-t)b)dt\right. \\
&&\text{ \ \ \ \ }\left. +\int_{\frac{1}{2}}^{1}\frac{(t-1)^{n+1}}{(n+1)!}%
f^{(n+1)}(ta+(1-t)b)dt\right\}
\end{eqnarray*}%
and integrating by parts gives%
\begin{eqnarray*}
&&(b-a)^{n+2}\int_{0}^{1}M_{n+1}(t)f^{(n+1)}(ta+(1-t)b)dt \\
&=&(b-a)^{n+2}\left\{ \frac{t^{n+1}}{(n+1)!}\left. \frac{f^{(n)}(ta+(1-t)b)}{%
a-b}\right\vert _{0}^{\frac{1}{2}}-\frac{1}{a-b}\int_{0}^{\frac{1}{2}}\frac{%
t^{n}}{n!}f^{(n)}(ta+(1-t)b)dt\right. \\
&&\text{ \ \ }\left. +\frac{(t-1)^{n+1}}{(n+1)!}\left. \frac{%
f^{(n)}(ta+(1-t)b)}{a-b}\right\vert _{\frac{1}{2}}^{1}-\frac{1}{a-b}\int_{%
\frac{1}{2}}^{1}\frac{(t-1)^{n}}{n!}f^{(n)}(ta+(1-t)b)dt\right\} \\
&=&-\frac{1+(-1)^{n}}{2^{n+1}(n+1)!}f^{(n)}\left( \frac{a+b}{2}\right)
(b-a)^{n+1}+(b-a)^{n+1}\int_{0}^{1}M_{n}(t)f^{(n)}(ta+(1-t)b)dt.
\end{eqnarray*}%
That is%
\begin{eqnarray*}
(b-a)^{n+1}\int_{0}^{1}M_{n}(t)f^{(n)}(ta+(1-t)b)dt &=&\frac{1+(-1)^{n}}{%
2^{n+1}(n+1)!}f^{(n)}\left( \frac{a+b}{2}\right) (b-a)^{n+1} \\
&&+(b-a)^{n+2}\int_{0}^{1}M_{n+1}(t)f^{(n+1)}(ta+(1-t)b)dt.
\end{eqnarray*}%
Now, using the mathematical induction hypothesis, we get%
\begin{eqnarray*}
\int_{a}^{b}f(t)dt &=&\overset{n-1}{\underset{k=0}{\sum }}\left( \frac{%
1+(-1)^{k}}{2^{k+1}(k+1)!}\right) (b-a)^{k+1}f^{(k)}\left( \frac{a+b}{2}%
\right) \\
&&+\frac{1+(-1)^{n}}{2^{n+1}(n+1)!}(b-a)^{n+1}f^{(n)}\left( \frac{a+b}{2}%
\right) +(b-a)^{n+2}\int_{0}^{1}M_{n+1}(t)f^{(n+1)}(ta+(1-t)b)dt \\
&=&\overset{n}{\underset{k=0}{\sum }}\left( \frac{1+(-1)^{k}}{2^{k+1}(k+1)!}%
\right) (b-a)^{k+1}f^{(k)}\left( \frac{a+b}{2}\right) \\
&&+(b-a)^{n+2}\int_{0}^{1}M_{n+1}(t)f^{(n+1)}(ta+(1-t)b)dt.
\end{eqnarray*}%
That is, the identity (\ref{b}) and the theorem is thus proved.
\end{proof}

\begin{theorem}
\label{8}For $n\geq 1,$ let $f:I\subset 
\mathbb{R}
\rightarrow 
\mathbb{R}
$ be $n-$time differentiable and $a<b.$ If $f^{(n)}\in L[a,b]$ and $%
\left\vert f^{(n)}\right\vert $ is convex on $[a,b],$ then the following
inequality holds:%
\begin{eqnarray}
&&\left\vert \int_{a}^{b}f(t)dt-\overset{n-1}{\underset{k=0}{\sum }}\left( 
\frac{1+(-1)^{k}}{2^{k+1}(k+1)!}\right) (b-a)^{k+1}f^{(k)}\left( \frac{a+b}{2%
}\right) \right\vert   \label{3} \\
&\leq &\frac{(b-a)^{n+1}}{2^{n}(n+1)!}\left( \frac{\left\vert
f^{(n)}(a)\right\vert +\left\vert f^{(n)}(b)\right\vert }{2}\right) .  \notag
\end{eqnarray}
\end{theorem}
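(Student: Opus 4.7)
The plan is to use the integral identity in Lemma \ref{2} as the starting point. Rearranging (\ref{a}) puts the expression inside the absolute value on the left-hand side of (\ref{3}) equal to $(b-a)^{n+1}\int_{0}^{1}M_{n}(t)f^{(n)}(ta+(1-t)b)\,dt$. I would then pull the absolute value inside the integral and split the range $[0,1]$ at $\frac{1}{2}$ according to the piecewise definition of $M_{n}$. On each piece $M_{n}(t)$ has a definite sign, so $|M_{n}(t)|=t^{n}/n!$ on $[0,\tfrac12]$ and $|M_{n}(t)|=(1-t)^{n}/n!$ on $(\tfrac12,1]$.

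Next, I would invoke the hypothesis that $|f^{(n)}|$ is convex, which gives
\begin{equation*}
\bigl|f^{(n)}(ta+(1-t)b)\bigr|\leq t\bigl|f^{(n)}(a)\bigr|+(1-t)\bigl|f^{(n)}(b)\bigr|.
\end{equation*}
Substituting this bound leaves four elementary polynomial integrals: $\int_{0}^{1/2}t^{n+1}dt$, $\int_{0}^{1/2}t^{n}(1-t)dt$, $\int_{1/2}^{1}(1-t)^{n}t\,dt$, and $\int_{1/2}^{1}(1-t)^{n+1}dt$. The key observation that keeps the bookkeeping short is the substitution $s=1-t$ on the interval $(\tfrac12,1]$, which pairs the third integral with the second and the fourth with the first. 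After this pairing, the coefficient of $|f^{(n)}(a)|$ and the coefficient of $|f^{(n)}(b)|$ each collapse to $\int_{0}^{1/2}\frac{t^{n}}{n!}\,dt=\frac{1}{2^{n+1}(n+1)!}$.

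Putting everything back together gives
\begin{equation*}
(b-a)^{n+1}\cdot\frac{1}{2^{n+1}(n+1)!}\Bigl(\bigl|f^{(n)}(a)\bigr|+\bigl|f^{(n)}(b)\bigr|\Bigr),
\end{equation*}
which is exactly the right-hand side of (\ref{3}) after factoring out the $\tfrac12$. I do not expect any genuine obstacle here; the only step requiring care is the arithmetic of the four integrals, and the symmetry-based substitution $s=1-t$ is what makes the cross-terms in the coefficients of $|f^{(n)}(a)|$ and $|f^{(n)}(b)|$ cancel cleanly so that both coefficients end up equal. Since the argument relies only on Lemma \ref{2}, the triangle inequality, and convexity, no stronger tool (such as H\"older's inequality) is needed for this particular theorem.
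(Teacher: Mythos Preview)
Your proposal is correct and follows essentially the same route as the paper's own proof: start from Lemma~\ref{2}, pass the absolute value inside, split at $t=\tfrac12$, apply convexity of $|f^{(n)}|$, and evaluate the resulting polynomial integrals. The paper's proof even name-drops H\"older's inequality but does not actually use it, so your remark that only the triangle inequality and convexity are needed is spot on; your explicit use of the substitution $s=1-t$ to exploit the symmetry is a clearer account of why the two coefficients coincide than what the paper provides.
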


\begin{proof}
Since $\left\vert f^{(n)}\right\vert $ is convex on $[a,b]$, from Lemma \ref%
{2} and H\"{o}lder integral inequality, it follows that%
\begin{eqnarray*}
&&\left\vert \int_{a}^{b}f(t)dt-\overset{n-1}{\underset{k=0}{\sum }}\left( 
\frac{1+(-1)^{k}}{2^{k+1}(k+1)!}\right) (b-a)^{k+1}f^{(k)}\left( \frac{a+b}{2%
}\right) \right\vert  \\
&\leq &(b-a)^{n+1}\int_{0}^{1}\left\vert M_{n}(t)\right\vert \left\vert
f^{(n)}(ta+(1-t)b)\right\vert dt \\
&\leq &(b-a)^{n+1}\left\{ \int_{0}^{\frac{1}{2}}\frac{t^{n}}{n!}\left[
t\left\vert f^{(n)}(a)\right\vert +(1-t)\left\vert f^{(n)}(b)\right\vert %
\right] dt\right.  \\
&&\text{ \ \ \ \ \ \ \ \ \ \ \ }\left. +\int_{\frac{1}{2}}^{1}\frac{(1-t)^{n}%
}{n!}\left[ t\left\vert f^{(n)}(a)\right\vert +(1-t)\left\vert
f^{(n)}(b)\right\vert \right] dt\right\}  \\
&=&\frac{(b-a)^{n+1}}{2^{n}(n+1)!}\left( \frac{\left\vert
f^{(n)}(a)\right\vert +\left\vert f^{(n)}(b)\right\vert }{2}\right) .
\end{eqnarray*}%
This completes the proof.
\end{proof}

\begin{theorem}
Let $f:I\subset 
\mathbb{R}
\rightarrow 
\mathbb{R}
$ be $n-$time differentiable and $a<b.$ If $f^{(n)}\in L[a,b]$ and $%
\left\vert f^{(n)}\right\vert ^{q}$ is convex on $[a,b],$ then we have%
\begin{eqnarray}
&&\left\vert \int_{a}^{b}f(t)dt-\overset{n-1}{\underset{k=0}{\sum }}\left( 
\frac{1+(-1)^{k}}{2^{k+1}(k+1)!}\right) (b-a)^{k+1}f^{(k)}\left( \frac{a+b}{2%
}\right) \right\vert   \label{4} \\
&\leq &\frac{(b-a)^{n+1}}{2^{n+1}n!}\left( \frac{1}{np+1}\right) ^{\frac{1}{p%
}}  \notag \\
&&\times \left\{ \left( \frac{\left\vert f^{(n)}(a)\right\vert
^{q}+3\left\vert f^{(n)}(b)\right\vert ^{q}}{4}\right) ^{\frac{1}{q}}+\left( 
\frac{3\left\vert f^{(n)}(a)\right\vert ^{q}+\left\vert
f^{(n)}(b)\right\vert ^{q}}{4}\right) ^{\frac{1}{q}}\right\}   \notag
\end{eqnarray}%
where $\frac{1}{p}+\frac{1}{q}=1.$
\end{theorem}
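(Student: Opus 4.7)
The plan is to start from Lemma \ref{2}, which expresses the left-hand side of \eqref{4} as the absolute value of
\[
(b-a)^{n+1}\int_{0}^{1}M_{n}(t)f^{(n)}(ta+(1-t)b)\,dt.
\]
Pulling the absolute value inside and using the piecewise definition of $M_n$, I would split this integral at $t=1/2$ into the two integrals
\[
\int_{0}^{1/2}\frac{t^n}{n!}\bigl|f^{(n)}(ta+(1-t)b)\bigr|\,dt
\quad\text{and}\quad
\int_{1/2}^{1}\frac{(1-t)^n}{n!}\bigl|f^{(n)}(ta+(1-t)b)\bigr|\,dt,
\]
so that the two parentheses in the desired bound arise symmetrically from the two halves of $[0,1]$.

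Next, I would apply H\"{o}lder's inequality with exponents $p$ and $q$ to each of these two integrals separately, placing $t^n/n!$ (respectively $(1-t)^n/n!$) in the $L^p$ factor and $|f^{(n)}(ta+(1-t)b)|$ in the $L^q$ factor. The $L^p$ factor is an elementary power integral: on $[0,1/2]$ one obtains
\[
\Bigl(\int_{0}^{1/2}\tfrac{t^{np}}{(n!)^p}\,dt\Bigr)^{1/p}=\frac{1}{n!}\Bigl(\frac{1}{(np+1)\,2^{np+1}}\Bigr)^{1/p},
\]
and the integral on $[1/2,1]$ of $(1-t)^{np}$ gives exactly the same value by the substitution $u=1-t$.

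For the $L^q$ factor I would invoke the convexity of $|f^{(n)}|^q$, yielding
\[
\bigl|f^{(n)}(ta+(1-t)b)\bigr|^q\le t\,|f^{(n)}(a)|^q+(1-t)\,|f^{(n)}(b)|^q,
\]
and then evaluate $\int_0^{1/2} t\,dt = 1/8$, $\int_0^{1/2}(1-t)\,dt = 3/8$ (and the reflected pair on $[1/2,1]$), so that
\[
\int_{0}^{1/2}\bigl[t|f^{(n)}(a)|^q+(1-t)|f^{(n)}(b)|^q\bigr]dt=\frac{|f^{(n)}(a)|^q+3|f^{(n)}(b)|^q}{8},
\]
and symmetrically one gets $(3|f^{(n)}(a)|^q+|f^{(n)}(b)|^q)/8$ on $[1/2,1]$. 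These produce the two bracketed terms in \eqref{4} once the denominator $8$ is rewritten as $2\cdot 4$.

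The only real bookkeeping step is collecting the constants: the prefactor becomes $\tfrac{1}{n!}\,2^{-(n+1/p)}(np+1)^{-1/p}\cdot 2^{-1/q}$, which simplifies via $\tfrac{1}{p}+\tfrac{1}{q}=1$ to $\tfrac{1}{n!\,2^{n+1}}(np+1)^{-1/p}$; after multiplication by $(b-a)^{n+1}$ and summing the contributions of the two subintervals, one recovers exactly \eqref{4}. I do not expect any serious obstacle beyond keeping track of these exponents of $2$; the main conceptual point is the initial decision to split at $t=1/2$ so that H\"{o}lder may be applied to each piece on its own, which is necessary because $M_n$ changes sign/formula across $t=1/2$.
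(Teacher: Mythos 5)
Your proposal is correct and follows essentially the same route as the paper's own proof: split the integral at $t=1/2$ using the piecewise form of $M_n$, apply H\"{o}lder's inequality with exponents $p,q$ on each half, and then use the convexity of $\left\vert f^{(n)}\right\vert ^{q}$ to evaluate the $L^{q}$ factors. Your bookkeeping of the powers of $2$ (in particular the $2^{-1/p}\cdot 2^{-1/q}=2^{-1}$ simplification and rewriting the denominator $8$ as $2\cdot 4$) is accurate and recovers the stated constant.
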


\begin{proof}
From Lemma \ref{2} and H\"{o}lder integral inequality, we obtain%
\begin{eqnarray*}
&&\left\vert \int_{a}^{b}f(t)dt-\overset{n-1}{\underset{k=0}{\sum }}\left( 
\frac{1+(-1)^{k}}{2^{k+1}(k+1)!}\right) (b-a)^{k+1}f^{(k)}\left( \frac{a+b}{2%
}\right) \right\vert  \\
&\leq &(b-a)^{n+1}\int_{0}^{1}\left\vert M_{n}(t)\right\vert \left\vert
f^{(n)}(ta+(1-t)b)\right\vert dt \\
&\leq &\frac{(b-a)^{n+1}}{n!}\left\{ \left( \int_{0}^{\frac{1}{2}%
}t^{np}dt\right) ^{\frac{1}{p}}\left( \int_{0}^{\frac{1}{2}}\left\vert
f^{(n)}(ta+(1-t)b)\right\vert ^{q}dt\right) ^{\frac{1}{q}}\right.  \\
&&\text{ \ \ \ }\left. +\left( \int_{\frac{1}{2}}^{1}(1-t)^{np}dt\right) ^{%
\frac{1}{p}}\left( \int_{\frac{1}{2}}^{1}\left\vert
f^{(n)}(ta+(1-t)b)\right\vert ^{q}dt\right) ^{\frac{1}{q}}\right\} .
\end{eqnarray*}%
Since $\left\vert f^{(n)}\right\vert ^{q}$ is convex on $[a,b],$ then 
\begin{eqnarray*}
&&\left\vert \int_{a}^{b}f(t)dt-\overset{n-1}{\underset{k=0}{\sum }}\left( 
\frac{1+(-1)^{k}}{2^{k+1}(k+1)!}\right) (b-a)^{k+1}f^{(k)}\left( \frac{a+b}{2%
}\right) \right\vert  \\
&\leq &\frac{(b-a)^{n+1}}{2^{n+1}n!}\left( \frac{1}{np+1}\right) ^{\frac{1}{p%
}} \\
&&\times \left\{ \left( \frac{\left\vert f^{(n)}(a)\right\vert
^{q}+3\left\vert f^{(n)}(b)\right\vert ^{q}}{4}\right) ^{\frac{1}{q}}+\left( 
\frac{3\left\vert f^{(n)}(a)\right\vert ^{q}+\left\vert
f^{(n)}(b)\right\vert ^{q}}{4}\right) ^{\frac{1}{q}}\right\} 
\end{eqnarray*}%
which completes the proof.
\end{proof}

\begin{theorem}
\label{11}For $n\geq 1,$ let $f:I\subset 
\mathbb{R}
\rightarrow 
\mathbb{R}
$ be $n-$time differentiable and $a<b.$ If $f^{(n)}\in L[a,b]$ and $%
\left\vert f^{(n)}\right\vert ^{q}$ is convex on $[a,b],$ for $q\geq 1,$
then the following inequality holds:%
\begin{eqnarray}
&&\left\vert \int_{a}^{b}f(t)dt-\overset{n-1}{\underset{k=0}{\sum }}\left( 
\frac{1+(-1)^{k}}{2^{k+1}(k+1)!}\right) (b-a)^{k+1}f^{(k)}\left( \frac{a+b}{2%
}\right) \right\vert   \label{7} \\
&\leq &\frac{(b-a)^{n+1}}{2^{n+1}(n+1)!}\left\{ \left[ \frac{n+1}{2n+4}%
\left\vert f^{(n)}(a)\right\vert ^{q}+\frac{n+3}{2n+4}\left\vert
f^{(n)}(b)\right\vert ^{q}\right] ^{\frac{1}{q}}\right.   \notag \\
&&\text{ \ \ \ \ \ \ \ \ \ \ \ \ \ }\left. +\left[ \left[ \frac{n+3}{2n+4}%
\left\vert f^{(n)}(a)\right\vert ^{q}+\frac{n+1}{2n+4}\left\vert
f^{(n)}(b)\right\vert ^{q}\right] ^{\frac{1}{q}}\right] \right\} .  \notag
\end{eqnarray}
\end{theorem}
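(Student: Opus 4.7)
The plan is to follow the same template as Theorems \ref{8} and the preceding one, but with the power-mean inequality replacing Hölder, which is the natural tool when $q\geq 1$ (and allows the case $p=\infty$ to be handled without separate treatment). First I would apply Lemma \ref{2} together with the triangle inequality to reduce the left side to
\[
(b-a)^{n+1}\left\{\int_{0}^{1/2}\frac{t^{n}}{n!}\bigl|f^{(n)}(ta+(1-t)b)\bigr|\,dt+\int_{1/2}^{1}\frac{(1-t)^{n}}{n!}\bigl|f^{(n)}(ta+(1-t)b)\bigr|\,dt\right\}.
\]

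Next, for each of the two integrals I would apply the power-mean inequality in the form $\int w\,|g|\leq (\int w)^{1-1/q}(\int w\,|g|^{q})^{1/q}$, with weight $w(t)=t^{n}/n!$ on $[0,1/2]$ and $w(t)=(1-t)^{n}/n!$ on $[1/2,1]$. The key moment to compute first is the ``total mass''
\[
\int_{0}^{1/2}\frac{t^{n}}{n!}\,dt=\int_{1/2}^{1}\frac{(1-t)^{n}}{n!}\,dt=\frac{1}{2^{n+1}(n+1)!},
\]
which supplies exactly the prefactor $\frac{(b-a)^{n+1}}{2^{n+1}(n+1)!}$ appearing in the theorem (the $(1-1/q)$-power of the mass combines with one power of the mass pulled inside the $1/q$-bracket).

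Then I would use convexity of $|f^{(n)}|^{q}$ to replace $|f^{(n)}(ta+(1-t)b)|^{q}$ by $t|f^{(n)}(a)|^{q}+(1-t)|f^{(n)}(b)|^{q}$ and evaluate the two auxiliary integrals $\int_{0}^{1/2}\frac{t^{n+1}}{n!}dt=\frac{1}{2^{n+2}(n+2)\,n!}$ and $\int_{0}^{1/2}\frac{t^{n}(1-t)}{n!}dt=\frac{n+3}{2^{n+2}(n+2)(n+1)!}$, then normalize by dividing by the total mass $\frac{1}{2^{n+1}(n+1)!}$. This yields the weights $\frac{n+1}{2n+4}$ and $\frac{n+3}{2n+4}$ attached to $|f^{(n)}(a)|^{q}$ and $|f^{(n)}(b)|^{q}$ in the first bracket; a symmetric substitution $u=1-t$ on $[1/2,1]$ swaps the two weights and gives the second bracket.

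The argument is essentially a careful bookkeeping exercise; no single step is genuinely hard, but the main place to be careful is matching the combinatorial coefficients. Specifically, one must verify that dividing $\frac{1}{2^{n+2}(n+2)\,n!}$ by $\frac{1}{2^{n+1}(n+1)!}$ produces $\frac{n+1}{2(n+2)}=\frac{n+1}{2n+4}$ and, analogously, that $\frac{n+3}{2^{n+2}(n+2)(n+1)!}$ normalizes to $\frac{n+3}{2n+4}$. Once these identities are confirmed, assembling the two bracketed terms and factoring out $\frac{(b-a)^{n+1}}{2^{n+1}(n+1)!}$ delivers \eqref{7}.
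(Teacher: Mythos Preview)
Your proposal is correct and follows essentially the same route as the paper's own proof: Lemma~\ref{2} plus the triangle inequality, then the power-mean inequality with weight $t^{n}$ (respectively $(1-t)^{n}$) on each half-interval, followed by convexity of $|f^{(n)}|^{q}$ and evaluation of the resulting moments. The paper presents exactly these steps but suppresses the explicit moment computations that you have carried out; your verification of the normalized weights $\frac{n+1}{2n+4}$ and $\frac{n+3}{2n+4}$ fills in precisely the bookkeeping the paper omits.
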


\begin{proof}
From Lemma \ref{2} and using the well known power-mean integral inequality,
we have%
\begin{eqnarray*}
&&\left\vert \int_{a}^{b}f(t)dt-\overset{n-1}{\underset{k=0}{\sum }}\left( 
\frac{1+(-1)^{k}}{2^{k+1}(k+1)!}\right) (b-a)^{k+1}f^{(k)}\left( \frac{a+b}{2%
}\right) \right\vert  \\
&\leq &(b-a)^{n+1}\int_{0}^{1}\left\vert M_{n}(t)\right\vert \left\vert
f^{(n)}(ta+(1-t)b)\right\vert dt \\
&\leq &\frac{(b-a)^{n+1}}{n!}\left\{ \left( \int_{0}^{\frac{1}{2}%
}t^{n}dt\right) ^{1-\frac{1}{q}}\left( \int_{0}^{\frac{1}{2}}t^{n}\left\vert
f^{(n)}(ta+(1-t)b)\right\vert ^{q}dt\right) ^{\frac{1}{q}}\right.  \\
&&\text{ \ \ \ \ \ \ }\left. +\left( \int_{\frac{1}{2}}^{1}(1-t)^{n}dt%
\right) ^{1-\frac{1}{q}}\left( \int_{\frac{1}{2}}^{1}(1-t)^{n}\left\vert
f^{(n)}(ta+(1-t)b)\right\vert ^{q}dt\right) ^{\frac{1}{q}}\right\} .
\end{eqnarray*}%
Since $\left\vert f^{(n)}\right\vert ^{q}$ is convex on $[a,b],$ for $q\geq
1,$ then we obtain 
\begin{eqnarray*}
&&\left\vert \int_{a}^{b}f(t)dt-\overset{n-1}{\underset{k=0}{\sum }}\left( 
\frac{1+(-1)^{k}}{2^{k+1}(k+1)!}\right) (b-a)^{k+1}f^{(k)}\left( \frac{a+b}{2%
}\right) \right\vert  \\
&\leq &\frac{(b-a)^{n+1}}{2^{n+1}(n+1)!}\left\{ \left[ \frac{n+1}{2n+4}%
\left\vert f^{(n)}(a)\right\vert ^{q}+\frac{n+3}{2n+4}\left\vert
f^{(n)}(b)\right\vert ^{q}\right] ^{\frac{1}{q}}\right.  \\
&&\text{ \ \ \ \ \ \ \ \ \ \ \ \ }\left. +\left[ \left[ \frac{n+3}{2n+4}%
\left\vert f^{(n)}(a)\right\vert ^{q}+\frac{n+1}{2n+4}\left\vert
f^{(n)}(b)\right\vert ^{q}\right] ^{\frac{1}{q}}\right] \right\} 
\end{eqnarray*}%
whisch completes the proof.
\end{proof}

\end{document}